\documentclass[conference]{IEEEtran}

\newcommand{\ja}[1]{}
\newcommand{\as}[1]{}

\usepackage{cite}

\usepackage{url}

\usepackage{amssymb,amsthm}

\usepackage{mathtools}

\usepackage{tikz}

\newcommand{\Real}{\mathbb{R}}
\newcommand{\Integer}{\mathbb{Z}}

\newcommand{\Xf}{\mathbf{X}}
\newcommand{\Yf}{\mathbf{Y}}
\newcommand{\Zf}{\mathbf{Z}}

\newcommand{\XfN}{\Xf^{(N)}}
\newcommand{\YfN}{\Yf^{(N)}}

\newcommand{\Phat}{\widehat{P}}

\newcommand{\vzero}{\vec 0}

\newcommand{\vxi}{\vec \xi}
\newcommand{\vi}{\vec \imath}
\newcommand{\vj}{\vec \jmath}
\newcommand{\vk}{\vec k}
\newcommand{\vl}{\vec l}
\newcommand{\va}{\vec a}

\def\expect{\mathbb{E}}

\def\cov{\mathrm{Cov}}

\def\euler{\mathrm{e}}
\def\ramuno{\mathrm{i}}

\newcommand\rect{\mathrm{rect}}

\newcommand\MT{\mathrm{(MT)}}
\newcommand\MTlinear{\mathrm{(MT,lin)}}

\newtheorem{definition}{Definition}
\newtheorem{lemma}{Lemma}
\newtheorem{theorem}{Theorem}

\begin{document}

\title{Factor Analysis for Spectral Estimation}

\author{
\IEEEauthorblockN{Joakim And\'en}
\IEEEauthorblockA{Program in Applied and Computational Mathematics\\
Princeton University\\
Princeton, NJ, 08544\\
Email: janden@math.princeton.edu}
\and
\IEEEauthorblockN{Amit Singer}
\IEEEauthorblockA{Department of Mathematics and\\Program in Applied and Computational Mathematics\\
Princeton University\\
Princeton, NJ, 08544}
}

\maketitle

\begin{abstract}
Power spectrum estimation is an important tool in many applications, such as the whitening of noise.
The popular multitaper method enjoys significant success, but fails for short signals with few samples.
We propose a statistical model where a signal is given by a random linear combination of fixed, yet unknown, stochastic sources.
Given multiple such signals, we estimate the subspace spanned by the power spectra of these fixed sources.
Projecting individual power spectrum estimates onto this subspace increases estimation accuracy.
We provide accuracy guarantees for this method and demonstrate it on simulated and experimental data from cryo-electron microscopy.%
\footnote{The authors were partially supported by Award Number R01GM090200 from the NIGMS, FA9550-12-1-0317 from AFOSR,
Simons Investigator Award and Simons Collaboration on Algorithms and Geometry from Simons Foundation, and the Moore Foundation Data-Driven Discovery Investigator Award.}
\end{abstract}

\IEEEpeerreviewmaketitle

\section{Introduction}
\label{sec:intro}

Estimating the power spectrum of a stationary field arises in many applications \cite{stoica1997introduction,bond1997pervasive,harig2012mapping,delorme2004eeglab}.
For example, noise statistics play an important role in maximum likelihood estimation of signal parameters.
Hence, its power spectrum must be estimated.

The most basic estimator of the power spectrum is the periodogram, which is asymptotically unbiased but has very high variance.
To remedy this, spectral smoothness constraints or parametric models are often imposed \cite{brockwell-davis,percival-walden}.
The multitaper estimator \cite{thomson,percival-walden,abreu2017mse} is particularly popular, which multiplies the data with fixed tapers, computes their periodograms, and averages.
This reduces variance at the expense of smoothing the estimated power spectrum, potentially increasing bias.

In this work, we consider the problem of estimating the power spectra of multiple independent signals.
Given identically distributed signals, an ensemble average of their periodograms or multitaper estimates provides a low-variance power spectrum estimate.
However, in certain applications, signals are not identically distributed, but combine a small number of identically distributed stochastic sources.
For example, a non-stationary signal may be divided up into approximately stationary parts, each of which is a linear combination of some fixed random signals.
Alternatively, noise signals may arise from measurements subject to differing experimental conditions which are described by combining various noise sources.
This is the case for cryo-electron microscopy (cryo-EM) images, where large molecules are frozen in a thin layer of ice and then imaged by exposing them to an electron beam and recording the transmitted electrons \cite{frank,callaway}.
Variations in experimental conditions result in different noise characteristics.
To whiten the projection images, accurate estimates of their noise power spectrum are therefore necessary.

While traditional methods such as multitaper estimates are applied in this context, an estimator which takes into account the low-dimensional variability of the noise distributions could yield improved accuracy.
We propose a factor analysis model in which the $d$-dimensional field $\Xf: \Integer^d \rightarrow \Real$ is a linear combination of $r$ fixed fields
\begin{equation}
\label{eq:model-def}
\Xf[\vi] \coloneqq \sum_{l=1}^r a_l \Zf_l[\vi] \quad \vi \in \Integer^d\mbox{,}
\end{equation}
where $a_1, \ldots, a_r$ are independent random variables and $\Zf_1, \ldots, \Zf_r$ are independent linear random fields.
Typical values for $d$ include $d = 1$ for stationary processes and $d = 2$ for random images.

It follows that the power spectrum of $\Xf$ conditioned on $a_1, \ldots, a_r$ is a linear combination of the power spectra of $\Zf_1, \ldots, \Zf_r$.
That is, the conditional power spectra reside in a low-dimensional subspace and are therefore determined by a small number of factors.
We introduce a method for estimating this subspace and these factors given multiple realizations of $\Xf$.
The method allows for estimation of the number of factors, which is small for most applications.
Linear projection of individual power spectrum estimates onto the factor subspace then yields improved accuracy.

Section \ref{sec:single} discusses the power spectrum estimation problem for a single signal and describes the basic periodogram and multitaper methods.
The factor analysis model and associated power spectrum estimation method are introduced and analyzed in Section \ref{sec:factor}.
Finally, Section \ref{sec:results} presents numerical results on simulations and experimental data taken from cryo-EM images.
Software implementing our proposed method and reproducing the figures of this paper is available at \url{http://github.com/janden/fase/}.

\section{Single Power Spectrum Estimation}
\label{sec:single}

Given a zero-mean stationary random field $\Yf: \Integer^d \rightarrow \Real$ with finite second moments, we define its autocovariance as
\begin{equation}
\label{eq:autocov-def}
R_{\Yf}[\vj] \coloneqq \expect\left[\Yf[\vi]\Yf[\vi+\vj]\right] \quad \vj \in \Integer^d.
\end{equation}
Its Fourier transform is the power spectrum of $\Yf$:
\begin{equation}
\label{eq:powspec-def}
P_{\Yf}(\vxi) \coloneqq \sum_{\vi\in\Integer^d} R_\Yf[\vi] \euler^{-2\pi \ramuno\langle\vxi,\vi\rangle} \quad \vxi \in [-1/2, 1/2]^d.
\end{equation}
The power spectrum $P_{\Yf}$ is symmetric and non-negative.

While $\Yf$ is defined over $\Integer^d$, we are only given samples over some finite domain $M_N^d$, where $M_N = \{-\lceil N/2 \rceil+1, \ldots, \lfloor N/2 \rfloor\}$.
Let us denote this restriction of $\Yf$ to $M_N^d$ by $\YfN$.
Our task is then to estimate $P_{\Yf}$ given $\YfN$.

\subsection{Periodogram}
\label{sec:periodogram}

The most basic estimator for the power spectrum of a stationary field is the periodogram, given by
\begin{equation}
\label{eq:phat-def}
\Phat_{\YfN}[\vk] = \frac{1}{N^d} \left|\sum_{\vi\in M_N^d} \YfN[\vi] \euler^{-2\pi \ramuno \langle \vk, \vi \rangle/N}\right|^2 \quad \vk \in M_N^d\mbox{.}
\end{equation}
Since $\YfN$ is real, $\Phat_{\YfN}$ is symmetric and only needs to be computed on half of $M_N^d$.
Let us therefore choose a subdomain $M_{N,+}^d$ of $M_N^d$ such that $M_{N,+}^d \cup -M_{N,+}^d = M_N~(\mathrm{mod}~N)$ and $M_{N,+}^d \cap -M_{N,+}^d = \{\vzero\}$.

To study the properties of the periodogram, we first define a linear stationary field.
\begin{definition}
A stationary field $\Yf: \Integer^d \rightarrow \Real$ is linear if
\begin{equation}
\Yf[\vi] = \sum_{\vj\in\Integer^d} W[\vi]\psi[\vi-\vj]\mbox{,}
\end{equation}
where $W: \Integer^d \rightarrow \Real$ is a stationary, zero-mean, white noise field such that $\expect |W[\vi]|^2 = 1, \expect |W[\vi]|^4 < \infty$ and $\psi: \Integer^d \rightarrow \Real$ is a kernel satisfying $\sum_{\vi\in\Integer^d} |\psi[\vi]|\|\vi\|_1^{1/2} < \infty$.
\end{definition}

For such a field, the periodogram is an asymptotically unbiased estimator for the power spectrum:
\begin{lemma}
\label{lemma:phat-expect}
Let $\Yf: \Integer^d \rightarrow \Real$ be a linear field and let $\YfN$ be its restriction $M_N^d$.
Then we have, for $\vk \in M_{N,+}^d$,
\begin{equation}
\label{eq:phat-expect}
\expect \Phat_{\YfN}[\vk] = P_{\Yf}(\vk/N) + O(N^{-1/2})\mbox{.}
\end{equation}
\end{lemma}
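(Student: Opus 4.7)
The plan is to expand $\expect\Phat_{\YfN}[\vk]$ into a Fourier series that almost matches the one defining $P_\Yf(\vk/N)$ and then control the discrepancy via the tail decay of $R_\Yf$. First I would expand the squared modulus in \eqref{eq:phat-def}, take expectations term by term, and apply \eqref{eq:autocov-def}; changing the double summation variable to $\vec m = \vj-\vi$ and counting lattice pairs yields
\begin{equation*}
\expect \Phat_{\YfN}[\vk] = \sum_{\|\vec m\|_\infty<N} R_\Yf[\vec m]\,\prod_{s=1}^d\!\left(1-\tfrac{|m_s|}{N}\right)\euler^{2\pi\ramuno\langle\vk,\vec m\rangle/N},
\end{equation*}
i.e.\ a windowed version of $P_\Yf(\vk/N)$ with a product-Fej\'er kernel (symmetry of $R_\Yf$ makes the sign of the exponent irrelevant).

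Subtracting \eqref{eq:powspec-def} and using $1-\prod_s(1-x_s) \le \sum_s x_s$ for $x_s\in[0,1]$, together with the trivial bound $1$, I would reduce the claim to
\begin{equation*}
\sum_{\vec m\in\Integer^d}|R_\Yf[\vec m]|\,\min\!\left(1,\tfrac{\|\vec m\|_1}{N}\right) = O(N^{-1/2}).
\end{equation*}
Splitting the sum at $\|\vec m\|_1 = N^{1/2}$ and applying $1\le\|\vec m\|_1^{1/2}N^{-1/2}$ on the far regime converts this into $N^{-1/2}\bigl(\sum|R_\Yf[\vec m]| + \sum|R_\Yf[\vec m]|\|\vec m\|_1^{1/2}\bigr)$, so it suffices to show both sums are finite.

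The key input, and the main obstacle, is transferring summability from the kernel $\psi$ to the autocovariance. From the convolutional identity $\Yf = \psi * W$ and $\expect|W[\vi]|^2=1$ one gets $R_\Yf[\vec m] = \sum_\vl \psi[\vl]\psi[\vl+\vec m]$. Using the subadditivity bound $\|\vec m\|_1^{1/2}\le\|\vl\|_1^{1/2}+\|\vl+\vec m\|_1^{1/2}$ and exchanging the order of summation factors $\sum_{\vec m}|R_\Yf[\vec m]|\|\vec m\|_1^{1/2}$ into a product of $\sum_\vl|\psi[\vl]|$ and $\sum_\vl|\psi[\vl]|\|\vl\|_1^{1/2}$, both finite by hypothesis (the former because $\|\vl\|_1^{1/2}\ge 1$ off the origin). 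The summability of $R_\Yf$ alone follows similarly. This rate calibration is delicate: a naive bound using $\|\vec m\|_1/N$ throughout would require $\sum|\psi|\|\cdot\|_1<\infty$, which is stronger than the given condition, so the split at $N^{1/2}$ is precisely what aligns the $\tfrac12$-power moment of $\psi$ with the claimed $O(N^{-1/2})$ rate.
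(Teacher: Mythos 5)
Your proof is correct and takes essentially the same route as the paper, which simply defers to the $d=1$ case (Proposition 10.3.1 of Brockwell--Davis) and its standard multidimensional generalization: the Fej\'er-windowed expansion of $\expect\Phat_{\YfN}[\vk]$ in terms of $R_\Yf$, the reduction to bounding $\sum_{\vec m}|R_\Yf[\vec m]|\min(1,\|\vec m\|_1/N)$, and the transfer of the half-moment condition from $\psi$ to $R_\Yf$ via $\|\vec m\|_1^{1/2}\le\|\vl\|_1^{1/2}+\|\vl+\vec m\|_1^{1/2}$. One minor slip: with the split placed at $\|\vec m\|_1=N^{1/2}$, the stated far-regime inequality $1\le\|\vec m\|_1^{1/2}N^{-1/2}$ fails for $N^{1/2}<\|\vec m\|_1<N$ (it would require splitting at $\|\vec m\|_1=N$); the uniform bound $\min(1,x)\le x^{1/2}$ yields the claimed $O(N^{-1/2})$ in one step and renders the split unnecessary.
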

The proof is obtained as a generalization of the case $d = 1$ found in Proposition 10.3.1 of \cite{brockwell-davis}.
Recasting the periodogram as the Fourier transform of sums of pairs $\YfN[\vi]\YfN[\vi+\vj]$, the bias is due to not weighting these sums by the number of available pairs.
Reweighting eliminates the bias, but at a significant increase in variance, increasing the mean squared error of the estimator.
For this reason, the biased periodogram estimator is often preferred over its unbiased variant \cite{percival-walden}.

Despite its low bias for large $N$, the periodogram is not suitable for power spectrum estimation due its high variance:
\begin{lemma}
\label{lemma:phat-covar}
Let $\Yf$ and $\YfN$  be defined as in Lemma \ref{lemma:phat-expect} and
\begin{equation}
\delta_N[\vk_1,\vk_2] =
\left\{
\begin{array}{ll}
2 & \vk_1 = \vk_2 \mbox{~and~} \vk_1 \in \frac N 2 \Integer^d \\
1 & \vk_1 = \vk_2 \mbox{~and~} \vk_1 \notin \frac N 2 \Integer^d \\
0 & \vk_1 \neq \vk_2
\end{array}
\right.
\mbox{.}
\end{equation}
We then have, for all $\vk_1, \vk_2 \in M_{N,+}^d$,
\begin{align}
\nonumber
&\cov\left[ \Phat_{\YfN}[\vk_1], \Phat_{\YfN}[\vk_2] \right]\\
\label{eq:phat-covar}
&\,\,\,\,\, = 
\delta_N[\vk_1,\vk_2] P_{\Yf}(\vk_1/N)^2 + O(N^{-1/2})
\mbox{.}
\end{align}
\end{lemma}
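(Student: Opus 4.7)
The plan is to expand the covariance as a fourth-order moment of the discrete Fourier transform of $\YfN$ and then apply a cumulant decomposition to isolate the leading contributions. Define the DFT $J_N(\vk) \coloneqq \sum_{\vi\in M_N^d}\YfN[\vi]\euler^{-2\pi\ramuno\langle\vk,\vi\rangle/N}$, so that $\Phat_{\YfN}[\vk] = N^{-d}|J_N(\vk)|^2 = N^{-d}J_N(\vk)J_N(-\vk)$, using that $\Yf$ is real-valued. The target covariance is then
\begin{align*}
&\cov[\Phat_{\YfN}[\vk_1],\Phat_{\YfN}[\vk_2]] \\
&\quad = N^{-2d}\bigl(\expect[J_N(\vk_1)J_N(-\vk_1)J_N(\vk_2)J_N(-\vk_2)]\\
&\qquad\qquad - \expect[J_N(\vk_1)J_N(-\vk_1)]\,\expect[J_N(\vk_2)J_N(-\vk_2)]\bigr).
\end{align*}

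Because $\Yf$ is linear in the white noise $W$, each $J_N(\vk)$ is a linear combination of the i.i.d.\ variables $W[\vj]$, so the joint fourth moment decomposes into three pairing products of second moments plus a joint fourth cumulant $\kappa_{4,N}$. Writing $G_N(\vu,\vv)\coloneqq\expect[J_N(\vu)J_N(\vv)]$ and using $J_N(-\vk)=\overline{J_N(\vk)}$ to recognise the squared moduli, the three pairings contribute $G_N(\vk_1,-\vk_1)G_N(\vk_2,-\vk_2) + |G_N(\vk_1,\vk_2)|^2 + |G_N(\vk_1,-\vk_2)|^2$. The first product cancels the subtracted term in the covariance, so two tasks remain: control $G_N(\vk_1,\pm\vk_2)$ and control $\kappa_{4,N}$.

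For the pairings, rewrite $G_N(\vu,\vv)$ via the change of variables $(\vi_1,\vi_2)\mapsto(\vi_1,\vj=\vi_2-\vi_1)$ to obtain a sum of $R_\Yf[\vj]$ modulated by the partial exponential sum $\sum_{\vi\in M_N^d\cap(M_N^d-\vj)}\euler^{-2\pi\ramuno\langle\vu+\vv,\vi\rangle/N}$. Repeating the argument behind Lemma \ref{lemma:phat-expect} (whose sharpness rests on $\sum_\vi|\psi[\vi]|\|\vi\|_1^{1/2}<\infty$) yields
\begin{equation*}
G_N(\vu,\vv) = N^d P_\Yf(\vv/N)\,\ones[\vu+\vv\equiv\vzero\,(\bmod\,N)] + O(N^{d-1/2}).
\end{equation*}
Within $M_{N,+}^d$, the condition $\vk_1-\vk_2\equiv\vzero$ forces $\vk_1=\vk_2$, and $\vk_1+\vk_2\equiv\vzero$ forces in addition $\vk_1=\vk_2\in\tfrac N 2\Integer^d$. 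Squaring and dividing by $N^{2d}$ thus produces the three-case structure of $\delta_N[\vk_1,\vk_2]$; the $O(N^{-1/2})$ remainder in \eqref{eq:phat-covar} arises from the cross term between the leading $N^d P_\Yf$ and the $O(N^{d-1/2})$ bias.

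Finally, because $W$ has independent entries with finite fourth moment, $\kappa_{4,N}$ collapses to a single diagonal sum over the noise indices; Fourier summation along this diagonal gives $\kappa_{4,N}=O(N^d)$, so its contribution after dividing by $N^{2d}$ is $O(N^{-d})$, which is absorbed into the $O(N^{-1/2})$ error for $d\geq 1$. The main technical step is the multi-index extension of the Dirichlet/Fej\'er estimate underlying Lemma \ref{lemma:phat-expect}, needed to bound $G_N(\vu,\vv)$ uniformly in $(\vu,\vv)$; once that estimate is in hand, the cumulant cancellation together with the fourth-cumulant bound assembles directly into \eqref{eq:phat-covar}.
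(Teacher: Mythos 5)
Your argument is correct, and it actually supplies the proof that the paper delegates to Proposition~10.3.2 of Brockwell--Davis. The key computations all check out: the moment--cumulant identity for the zero-mean variables $J_N(\pm\vk_1),J_N(\pm\vk_2)$ gives exactly three pairings plus a joint cumulant; the pairing $G_N(\vk_1,-\vk_1)G_N(\vk_2,-\vk_2)$ cancels the product of means; the uniform estimate $G_N(\vu,\vv)=N^dP_\Yf(\vv/N)\,\ones[\vu+\vv\equiv\vzero]+O(N^{d-1/2})$ follows from the partial geometric sums exactly as you say (with $N^d-\prod_p(N-|j_p|)\le N^{d-1}\|\vj\|_1$ and $\min(N,\|\vj\|_1)\le N^{1/2}\|\vj\|_1^{1/2}$ converting the hypothesis $\sum_{\vi}|\psi[\vi]|\,\|\vi\|_1^{1/2}<\infty$ into the $N^{d-1/2}$ error); and within $M_{N,+}^d$ the congruences $\vk_1\equiv\vk_2$ and $\vk_1\equiv-\vk_2$ reproduce the three cases of $\delta_N$, with the self-paired frequencies $\frac N2\Integer^d$ collecting both cross-pairings and hence the factor $2$. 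The route differs slightly from the cited source: Brockwell--Davis prove the linear-process case by factoring the periodogram through the transfer function, $\Phat_{\YfN}\approx|\widehat\psi|^2\,\Phat_{\Wf^{(N)}}$, reducing to the i.i.d.\ periodogram and controlling a remainder, whereas you compute the fourth moment directly via cumulants (Brillinger-style); your version generalizes to $d>1$ with no extra work, which is what the lemma needs. One hypothesis you use implicitly: collapsing $\kappa_{4,N}$ to a single diagonal sum requires the $W[\vj]$ to be \emph{independent} (so that all mixed joint cumulants vanish), not merely uncorrelated as the phrase ``white noise'' might suggest; this is also what the cited reference assumes and is evidently the paper's intent given the condition $\expect|W[\vi]|^4<\infty$, but it is worth stating.
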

The proof again follows the one-dimensional case, Proposition 10.3.2 in \cite{brockwell-davis}.
It relies on the fact that as $N \rightarrow \infty$, $\Phat_{\YfN}[\vk]$ converges to a $\chi^2$-distributed random variable.
Asymptotically, the standard deviation of $\Phat_{\YfN}[\vk]$ is therefore approximately proportional to its expectation $P_{\Yf}(\vk/N)$.

\subsection{The Multitaper Method}
\label{sec:multitaper}

The multitaper method for spectral estimation was introduced by D. J. Thomson as an alternative to the periodogram with reduced variance at the cost of increased bias \cite{thomson}.
It relies on multiplying the sample $\YfN$ by a number of data tapers, computing their periodograms, and averaging.

Given a target spectral resolution $\frac{1}{2N} < W < \frac{1}{2}$, the data tapers for $d = 1$ are given by $K = \lfloor 2NW \rfloor$ discrete prolate spheroidal sequences $v_{N,W}^{(l)}[i]$ for $i \in M_N$, where $l \in 1, \ldots, K$.
Abusing notation slightly, we denote their tensor products by
\begin{equation}
v_{N,W}^{(\vl)}[\vi] = \prod_{p=1}^d v_{N,W}^{(l_p)}[i_p] \quad \vi = (i_1, \ldots, i_d) \in M_N^d\mbox{,}
\end{equation}
where $\vl = (l_1, \ldots, l_d) \in \{1, \ldots, K\}^d$ \cite{hanssen1997multidimensional}.
The associated multitaper estimator is then given by
\begin{equation}
\Phat^{\MT}_{\YfN}[\vk] = \frac{1}{K^d} \sum_{\vl \in \{1, \ldots, K\}^d} \Phat_{v_{N,W}^{(\vl)} \cdot \YfN}[\vk] \mbox{,}
\end{equation}
where $(v_{N,W}^{(\vl)} \cdot \YfN)[\vi] = v_{N,W}^{(\vl)}[\vi] \YfN[\vi]$ for all $\vi \in M_N^d$.

The resolution parameter $W$ specifies the bias-variance trade-off of the estimator.
As $W$ is increased, variance is reduced, but the estimate is smoothed, potentially increasing the bias of the estimator.

\section{Power Spectrum Factor Analysis}
\label{sec:factor}

In certain applications, we do not have just one signal whose power spectrum we would like to estimate, but a set of signals.
To estimate their power spectra, we can apply the methods described in the previous section to each signal individually.
If in addition the signals are identically distributed, we can reduce the variance in the estimate by an ensemble average.

On the other hand, if the signals are not identically distributed, an ensemble average will destroy the variability in the power spectra.
A different approach is applicable if the signals can be written as linear combinations of a small set of fixed random fields.
In this case, factor analysis allows us to identify the subspace spanned by the power spectra of these fixed fields, allowing for increased accuracy by projecting multitaper estimates onto this space.

\subsection{Motivation: Cryo-Electron Microscopy}
\label{sec:cryoem}

Single-particle cryo-EM images are typically very noisy, with signal-to-noise ratios typically below $1/10$ \cite{frank,callaway}.
Many reconstruction algorithms assume that this noise is close to white, but this is rarely the case.
As a result, a whitening step is typically performed prior to reconstruction, in which the noise power spectrum is estimated and used to compute a whitening filter that is then applied.
Power spectrum estimation is typically done by extracting image patches containing mostly noise and averaging their periodograms \cite{bhamre2016denoising}.
Other methods estimate the noise power spectrum as part of the reconstruction algorithm \cite{relion}.

\begin{figure}
\begin{center}
\begin{tikzpicture}
\useasboundingbox (0, 0) rectangle (8, 4.1);
\draw[anchor=south] (2, 3.5) node {Image \#9078};
\node[anchor=south,inner sep=0] at (2, 0.6) {\includegraphics[width=2.8cm]{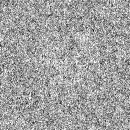}};
\draw[anchor=south] (2, 0) node {\footnotesize (a)};

\draw[anchor=south] (6, 3.5) node {Image \#9935};
\node[anchor=south,inner sep=0] at (6, 0.6) {\includegraphics[width=2.8cm]{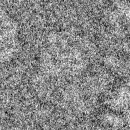}};
\draw[anchor=south] (6, 0) node {\footnotesize (b)};
\end{tikzpicture}
\end{center}
\caption{
\label{fig:cryoem-samples}
Two sample cryo-electron microscopy images with noise power spectra (a) close to white noise and (b) mostly concentrated in the lower frequencies.
Both images are taken from the same experimental dataset \cite{frank70s}.
}
\end{figure}

However, the noise distribution typically varies between projection images.
Indeed, different projection images are obtained under different experimental conditions (ice thickness, electron dose, microscope defocus, electron scattering properties of the molecules, and so on) which affect the characteristics of the noise \cite{baxter2009determination,penczek2010chapter,vulovic2013image}.
The variation in the noise signal can be modeled as a random linear combination of fixed noise sources due to background electrons, electrons scattered by the ice, and those inelastically scattered by the molecule.
Experimental images with different noise power spectra are shown in Figure \ref{fig:cryoem-samples}.
As we will see in the following, the power spectra of these noise signals can be accurately estimated using the proposed factor analysis.

\subsection{Problem Setup}
\label{sec:setup}

Let us denote the zero-mean stationary random field of interest by $\Xf: \Integer^d \rightarrow \Real$.
We model it in \eqref{eq:model-def} as a linear combination of $r$ fixed independent linear fields $\Zf_1, \ldots, \Zf_r$, with independent random coefficients $a_1, \ldots, a_r$.

Instead of a single field $\Xf$, we are concerned with a set of $n$ independent copies $\Xf_1, \ldots, \Xf_n$ of $\Xf$ defined by
\begin{equation}
\label{eq:model-s-def}
\Xf_s[\vi] \coloneqq \sum_{l=1}^r a_{s,l} \Zf_{s,l}[\vi]\mbox{,}
\end{equation}
for $s = 1, \ldots, n$.
Here $\Zf_{1,l}, \ldots, \Zf_{n,l}$ and $a_{1,l}, \ldots, a_{n,l}$ are independent and identically distributed copies of $\Zf_l$ and $a_l$, respectively, for $l = 1, \ldots, r$.
The given data consists of restrictions $\XfN_1, \ldots, \XfN_n$ of $\Xf_1, \ldots, \Xf_n$ to $M_N^d$.

The coefficient vectors $\va_1, \ldots, \va_n$ are unknown, but fixed for each signal.
As a result, the power spectrum of interest for $\Xf_s$ is the conditional power spectrum $P_{\Xf_s|\va_s}$, where we have replaced the expectation $\expect$ in \eqref{eq:autocov-def} by the conditional expectation $\expect_{\Zf_{s,1}, \ldots, \Zf_{s,r}|\va_s}$.
Our problem is therefore estimating $P_{\Xf_1|\va_1}, \ldots, P_{\Xf_n|\va_n}$ given $\XfN_1, \ldots, \XfN_n$.

One approach is to use the multitaper method described in Section \ref{sec:multitaper} applied to each signal $\XfN_1, \ldots, \XfN_n$.
However, we can obtain improved estimates by exploiting the fact that, for $s = 1, \ldots, n$,
\begin{equation}
\label{eq:model-s-powspec}
P_{\Xf_s|\va_s}(\vxi) = \sum_{l=1}^r a_{s,l}^2 P_{\Zf_l}(\vxi)\mbox{.}
\end{equation}
That is, the conditional power spectra are contained in a subspace of dimension at most $r$ spanned by $P_{\Zf_1}, \ldots, P_{\Zf_r}$.
These power spectra can thus be described using a small set (at most $r$) of common factors.
We can exploit this to increase the accuracy of a power spectrum estimate by projecting it onto this low-dimensional subspace.
In the following, we propose a method for estimating this subspace.

\subsection{Power Spectrum Covariance Estimation}
\label{sec:covar}

The non-trivial eigenvectors of the covariance matrix $\cov\left[ P_{\Xf_s|\va_s} \right]$ span the subspace containing the conditional power spectra.
To simplify our expressions, we define $P_{\Xf|\va}$ in the same way as $P_{\Xf_s|\va_s}$ for $s = 1, \ldots, n$ and note that these are all identically distributed.
We consider estimating the covariance matrix $\cov\left[ P_{\Xf|\va} \right]$ using the following theorem:
\begin{theorem}
\label{theorem}
Let $\Xf$ and $\Xf_1, \ldots, \Xf_n$ be as in \eqref{eq:model-def} and \eqref{eq:model-s-def}, respectively.
Furthermore, let $\XfN_1, \ldots, \XfN_n$ be the restrictions of $\Xf_1, \ldots, \Xf_n$ to $M_N^d$.
In this case, the quantities
\begin{equation}
\mu_n[\vk] = \frac{1}{n} \sum_{s=1}^n \Phat_{\XfN_s}[\vk]
\end{equation}
and
\begin{equation}
C_n[\vk_1,\vk_2] = \frac{1}{n} \sum_{s=1}^n \Phat_{\XfN_s}[\vk_1] \Phat_{\XfN_s}[\vk_2]\mbox{,}
\end{equation}
satisfy
\begin{equation}
\label{eq:mu_n-approx}
\mu_n[\vk] = P_{\Xf}(\vk/N) + \epsilon_1[\vk]
\end{equation}
and
\begin{align}
\nonumber
&\frac{1}{1+\delta[\vk_1,\vk_2]} C_n[\vk_1,\vk_2] - \mu_n[\vk_1] \mu_n[\vk_2] \\
\label{eq:C_n-approx}
&\, = \cov \left[ P_{\Xf|\va}(\vk_1/N), P_{\Xf|\va}(\vk_2/N) \right] + \epsilon_2[\vk_1,\vk_2]\mbox{,}
\end{align}
where $\vk, \vk_1, \vk_2 \in M_{N,+}^d$.
The expected magnitudes $\expect |\epsilon_1[\vk]|$ and $\expect |\epsilon_2[\vk_1,\vk_2]|$ are each bounded by $O(n^{-1/2} + N^{-1/2})$.
\end{theorem}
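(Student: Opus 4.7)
The plan is to lift the single-signal Lemmas \ref{lemma:phat-expect} and \ref{lemma:phat-covar} to the factor model by conditioning on the coefficient vector $\va_s$. Given $\va_s$, the field $\Xf_s = \sum_l a_{s,l}\Zf_{s,l}$ is a sum of independent scaled linear fields and is therefore itself a linear stationary field with conditional power spectrum $P_{\Xf|\va_s}(\vxi) = \sum_l a_{s,l}^2 P_{\Zf_l}(\vxi)$. Hence the earlier lemmas apply conditionally with $P_{\Yf}$ replaced by $P_{\Xf|\va_s}$; integrating the resulting bounds against the law of $\va$ supplies the $N^{-1/2}$ contribution, while averaging over $s = 1, \ldots, n$ supplies the $n^{-1/2}$ contribution via the usual sample-mean inequality.

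For $\mu_n[\vk]$, I would split $\mu_n[\vk] - P_\Xf(\vk/N)$ into bias and fluctuation. The bias $\expect\mu_n[\vk] - P_\Xf(\vk/N)$ equals $\expect_\va P_{\Xf|\va}(\vk/N) - P_\Xf(\vk/N) + O(N^{-1/2}) = O(N^{-1/2})$ by Lemma \ref{lemma:phat-expect} applied conditionally and the tower law. The fluctuation piece satisfies $\expect|\mu_n[\vk] - \expect\mu_n[\vk]| \leq (\var\mu_n[\vk])^{1/2} = n^{-1/2}(\var\Phat_{\XfN}[\vk])^{1/2}$, and the law of total variance together with Lemma \ref{lemma:phat-covar} gives $\var\Phat_{\XfN}[\vk] = O(1)$.

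For $C_n[\vk_1,\vk_2]$ I would compute the conditional second moment
\begin{equation*}
\expect\bigl[\Phat_{\XfN_s}[\vk_1]\Phat_{\XfN_s}[\vk_2]\mid\va_s\bigr] = \delta_N[\vk_1,\vk_2]\, P_{\Xf|\va_s}(\vk_1/N)^2 + P_{\Xf|\va_s}(\vk_1/N)\,P_{\Xf|\va_s}(\vk_2/N) + O(N^{-1/2})
\end{equation*}
from the conditional covariance plus product of conditional means, take $\expect_\va$, and combine it with the expansion $\expect[\mu_n[\vk_1]\mu_n[\vk_2]] = P_\Xf(\vk_1/N)P_\Xf(\vk_2/N) + O(n^{-1} + N^{-1/2})$ obtained by splitting the double sum into its $s = s'$ and $s \neq s'$ parts. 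A brief bookkeeping check verifies that the prefactor $1/(1+\delta[\vk_1,\vk_2])$ precisely converts the resulting expression into $\cov[P_{\Xf|\va}(\vk_1/N),P_{\Xf|\va}(\vk_2/N)]$ in both the off-diagonal and diagonal cases. The remaining fluctuation of $\epsilon_2$ about its mean is again bounded by a sample-mean argument of order $n^{-1/2}$.

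The main obstacle is controlling $\var\bigl[\Phat_{\XfN}[\vk_1]\Phat_{\XfN}[\vk_2]\bigr]$ uniformly in $N$, which is a fourth-order moment of the periodogram and hence an eighth-order moment of the driving white noise $W$, whereas the definition of a linear field only supplies four moments. Either the moment hypothesis needs to be strengthened, or a cumulant expansion must be used to separate the bounded Gaussian part (tractable via Isserlis' theorem and the summability assumption on $\psi$) from a non-Gaussian remainder estimated in terms of the given fourth cumulant. A minor secondary nuisance is the sub-lattice $\vk \in (N/2)\Integer^d$, where $\delta_N$ jumps from $1$ to $2$: at these points the stated factor $1 + \delta[\vk_1,\vk_2]$ leaves an $O(1)$ residual that must be absorbed into the $O(N^{-1/2})$ bound in an averaged sense, since the offending frequencies form an asymptotically negligible fraction of $M_{N,+}^d$.
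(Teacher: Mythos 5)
Your proposal follows essentially the same route as the paper's proof: condition Lemmas \ref{lemma:phat-expect} and \ref{lemma:phat-covar} on $\va$, use \eqref{eq:model-s-powspec} to identify the conditional power spectrum, take expectation over $\va$, and then replace population expectations by sample averages with an $O(n^{-1/2})$ fluctuation bound. Of the two caveats you raise, the eighth-moment requirement for $\var\bigl[\Phat_{\XfN}[\vk_1]\Phat_{\XfN}[\vk_2]\bigr]$ is a genuine refinement that the paper silently glosses over by simply invoking ``the central limit theorem,'' whereas the sublattice worry dissolves: the theorem's prefactor uses the same $\delta_N$ as Lemma \ref{lemma:phat-covar}, so the identity (conditional second moment) $=(1+\delta_N[\vk_1,\vk_2])\,P_{\Xf|\va}(\vk_1/N)P_{\Xf|\va}(\vk_2/N)+O(N^{-1/2})$ cancels exactly even at $\vk_1=\vk_2\in\frac N2\Integer^d$.
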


\begin{proof}
First, we condition Lemma \ref{lemma:phat-expect} on $\va$ to obtain
\begin{equation}
\label{eq:model-phat-condexpect}
\expect_{\Zf_{1},\ldots,\Zf_{r}|\va} \Phat_{\XfN}[\vk] = P_{\Xf|\va}(\vk/N) + O(N^{-1/2})\mbox{,}
\end{equation}
for $\vk \in M_{N,+}^d$.
Taking expectation with respect to $\va$ then gives
\begin{equation}
\label{eq:model-phat-expect}
\expect \Phat_{\XfN}[\vk] = P_{\Xf}(\vk/N) + O(N^{-1/2})\mbox{.}
\end{equation}
Averaging $\Phat_{\XfN_1}[\vk], \ldots, \Phat_{\XfN_n}[\vk]$ then gives us,
\begin{equation}
\label{eq:model-s-ave}
\mu_n[\vk] = \frac{1}{n} \sum_{s=1}^n \Phat_{\XfN_s}[\vk] = \expect \Phat_{\XfN}[\vk] + \epsilon_1[\vk]\mbox{,}
\end{equation}
according to the central limit theorem, where $\expect |\epsilon_1[\vk]| = O(n^{-1/2})$.
Plugging \eqref{eq:model-phat-expect} into \eqref{eq:model-s-ave} then proves \eqref{eq:mu_n-approx}.

Conditioning Lemma \ref{lemma:phat-covar} with respect to $\va$ gives
\begin{align}
\nonumber
&\cov_{\Zf_{1},\ldots,\Zf_{r}|\va} \left[ \Phat_{\XfN}[\vk_1], \Phat_{\XfN}[\vk_2] \right] \\
\label{eq:model-phat-sub-outer-condexpect}
&\, = \delta[\vk_1,\vk_2] P_{\Xf|\va}(\vk_1/N)^2 + O(N^{-1/2})\mbox{.}
\end{align}
Consequently
\begin{align}
\nonumber
&\expect_{\Zf_{1},\ldots,\Zf_{r}|\va} \left[ \Phat_{\XfN_s}[\vk_1] \Phat_{\XfN_s}[\vk_2] \right] \\
%\label{eq:model-phat-outer-condexpect}
\nonumber
&\, = (1 + \delta[\vk_1,\vk_2]) P_{\Xf|\va}(\vk_1/N) P_{\Xf|\va}(\vk_2/N) + O(N^{-1/2})\mbox{.}
\end{align}
Taking the expectation with respect to $\va$ now gives
\begin{align}
\nonumber
&\expect \left[ \Phat_{\XfN_s}[\vk_1] \Phat_{\XfN_s}[\vk_2] \right] \\
%\label{eq:model-phat-outer-expect}
\nonumber
&\, = (1+\delta[\vk_1,\vk_2]) \expect \left[ P_{\Xf|\va}(\vk_1/N) P_{\Xf|\va}(\vk_2/N)\right] + O(N^{-1/2})\mbox{.}
\end{align}
We now have
\begin{align}
\nonumber
&\frac{1}{1 + \delta[\vk_1,\vk_2]} \expect \left[ \Phat_{\XfN}[\vk_1] \Phat_{\XfN}[\vk_2] \right] - P_{\Xf}(\vk_1/N) P_{\Xf}(\vk_2/N) \\
&\, = \cov \left[ P_{\Xf|\va}(\vk_1/N), P_{\Xf|\va}(\vk_2/N)\right] + O(N^{-1/2})\mbox{.}
\end{align}
Replacing the left-hand side expectations with sums using the central limit theorem yields \eqref{eq:C_n-approx}.
\end{proof}

The theorem allows us to estimate the covariance matrix of the conditional power spectra.
Traditionally, this would be done by forming $C_n[\vk_1,\vk_2] - \mu_n[\vk_1] \mu_n[\vk_2]$.
However, due to the $\chi^2$ distribution of the periodogram coordinates, the variances of $C_n$ are larger than they should be, so a correction is needed.
That correction takes the form of $(1 + \delta[\vk_1,\vk_2])^{-1}$.

Let us denote this covariance matrix estimate by
\begin{equation}
\Sigma_n[\vk_1,\vk_2] = \frac{1}{1 + \delta[\vk_1,\vk_2]} C_n[\vk_1,\vk_2] - \mu_n[\vk_1] \mu_n[\vk_2]\mbox{,}
\end{equation}
for $\vk_1, \vk_2 \in M_{N,+}^d$.
Calculating its top eigenvectors allows us to estimate the low-dimensional subspace containing the conditional power spectrum $P_{\Xf|\va}$.
Specifically, these eigenvectors define a linear space that approximates the subspace spanned by $P_{\Zf_1}, \ldots, P_{\Zf_r}$.

\subsection{Linear Projection Estimators}
\label{sec:proj}

We now use the information gained in the previous section to obtain a better estimate of the conditional power spectrum $P_{\Xf|\va}$ from the multitaper estimate $\Phat_{\XfN}^{\MT}$.
It satisfies
\begin{equation}
\Phat_{\XfN}^{\MT} = P_{\Xf|\va} + R^{\MT}\mbox{,}
\end{equation}
where the size of the residual $R^{\MT}$ depends on $N$ and the regularity of $P_{\Xf|\va}$ with respect to the target multitaper resolution $W$ \cite{percival-walden}.
We know that $P_{\Xf|\va}$ is in the subspace spanned by the non-trivial eigenvectors of $\cov\left[P_{\Xf|\va}\right]$.
We can use this to reduce the variance of the residual term.

While the exact covariance is not known to us, we have an estimator $\Sigma_n$.
Let $v_1, \ldots, v_r$ be the leading eigenvectors of $\Sigma_n$.
We note that $r$ does not need to be known beforehand, but can be estimated from the eigenspectrum of $\Sigma_n$.
Here, we set $r$ to be the number of dominant eigenvalues of $\Sigma_n$ by locating the ``knee'' of the spectrum, where there is a significant drop in amplitude from the $r$th eigenvalue to the $(r+1)$th.
In addition, since we expect $\mu_n$ to be contained in the span of $v_1, \ldots, v_r$, we can also check that the projection of $\mu_n$ to that span preserves most of its energy. 

Projecting $\Phat_{\XfN}^{\MT}$ onto the span of $v_1, \ldots, v_r$ yields
\begin{equation}
\Phat^{\MTlinear}_{\XfN}[\vk] = \sum_{l=1}^r v_l[\vk] \langle v_l, \Phat_{\XfN}^{\MT} \rangle \mbox{.}
\end{equation}
This significantly reduces the estimation error $R$ since it will typically be almost orthogonal to $v_l$ and thus $\langle v_l, R \rangle \approx 0$ for $l = 1, \ldots, r$.
The orthogonality follows from the fact that $R$ is approximately isotropically distributed (its coordinates are close to independent random variables), so its inner product with a small set of fixed vectors is small with high probability.
Note that $\Phat^{\MTlinear}$ is not guaranteed to be non-negative.
To remedy this, negative components can be set to zero.
However, this does not greatly affect the error of the estimate.

\section{Numerical Results}
\label{sec:results}

To evaluate our method, we test it on noisy images simulated using \eqref{eq:model-def} for $r = 2$.
The first noise source $\Zf_1$ has its energy concentrated in the low frequencies, with a power spectrum of $P_{\Zf_1}(\vxi) = 2\rect(4\|\vxi\|)$, where $\rect$ is the rectangular function with support $[-1/2, 1/2]$.
The second source $\Zf_2$ has a power spectrum of $P_{\Zf_2}(\vxi) = 1/(1+4\|\vxi\|)$.
These are combined with normally distributed coefficients $a_1, a_2 \sim N(0, 1)$ in \eqref{eq:model-def} to yield $\Xf$.
Figure \ref{fig:sim-results}(a) shows two sample images of size $N = 32$.

\begin{figure*}
\begin{center}
\begin{tikzpicture}
\useasboundingbox (0, 0) rectangle (14, 4);

\node[anchor=south,inner sep=0] at (1, 2.5) {\includegraphics[width=1cm]{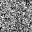}};
\node[anchor=south,inner sep=0] at (1, 1.25) {\includegraphics[width=1cm]{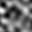}};
\node[anchor=south] at (1, 0.1) {(a)};

\node[anchor=south,inner sep=0] at (4.3, 1) {\includegraphics[width=4cm]{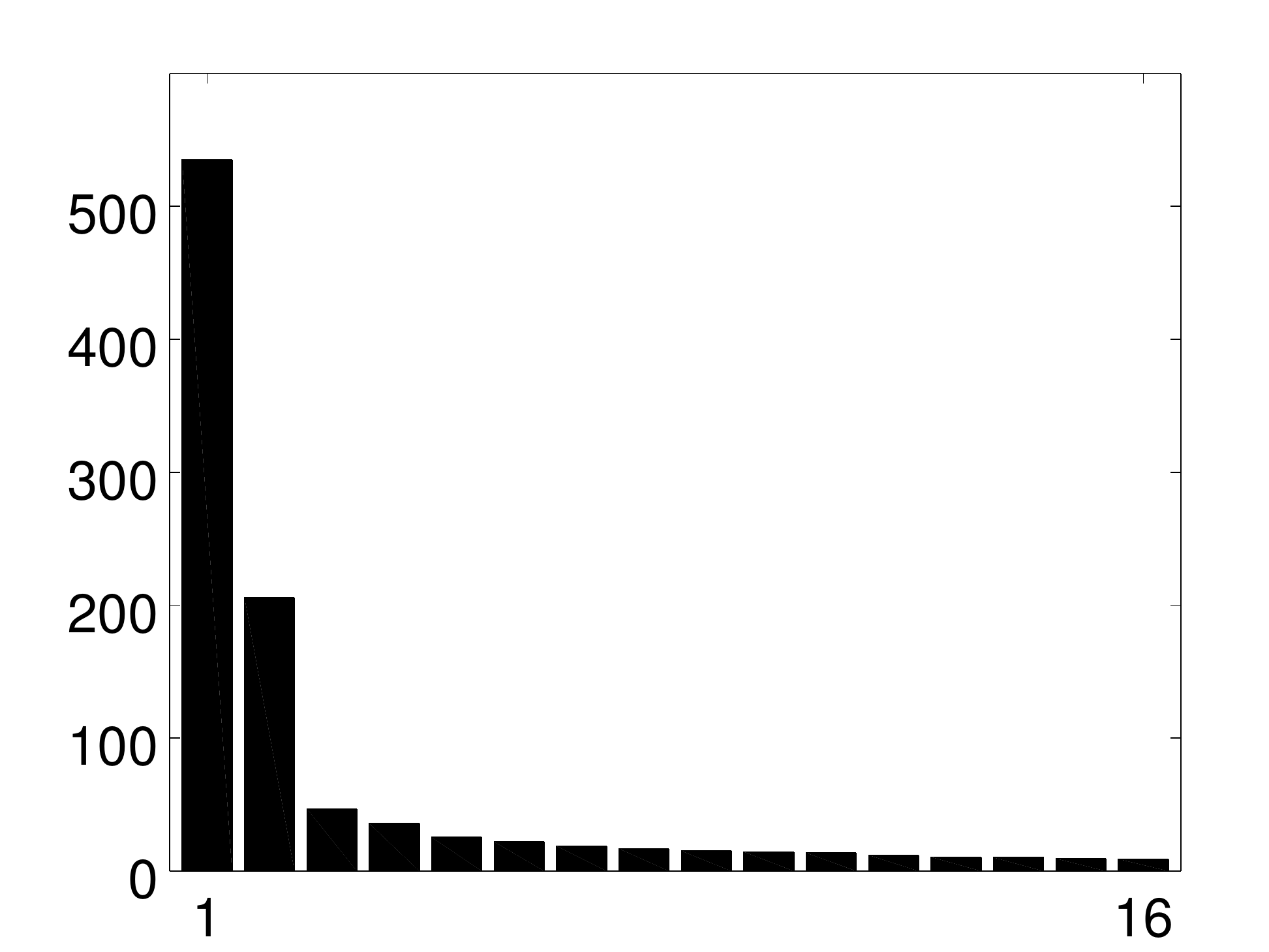}};
\node[anchor=south] at (4.3, 0.7) {\footnotesize $l$};
\node[anchor=south,rotate=90] at (2.5, 2.6) {\footnotesize $\lambda_l$};
\node[anchor=south] at (4.3, 0.1) {(b)};

\node[anchor=south,inner sep=0] at (8.3, 1) {\includegraphics[width=4cm]{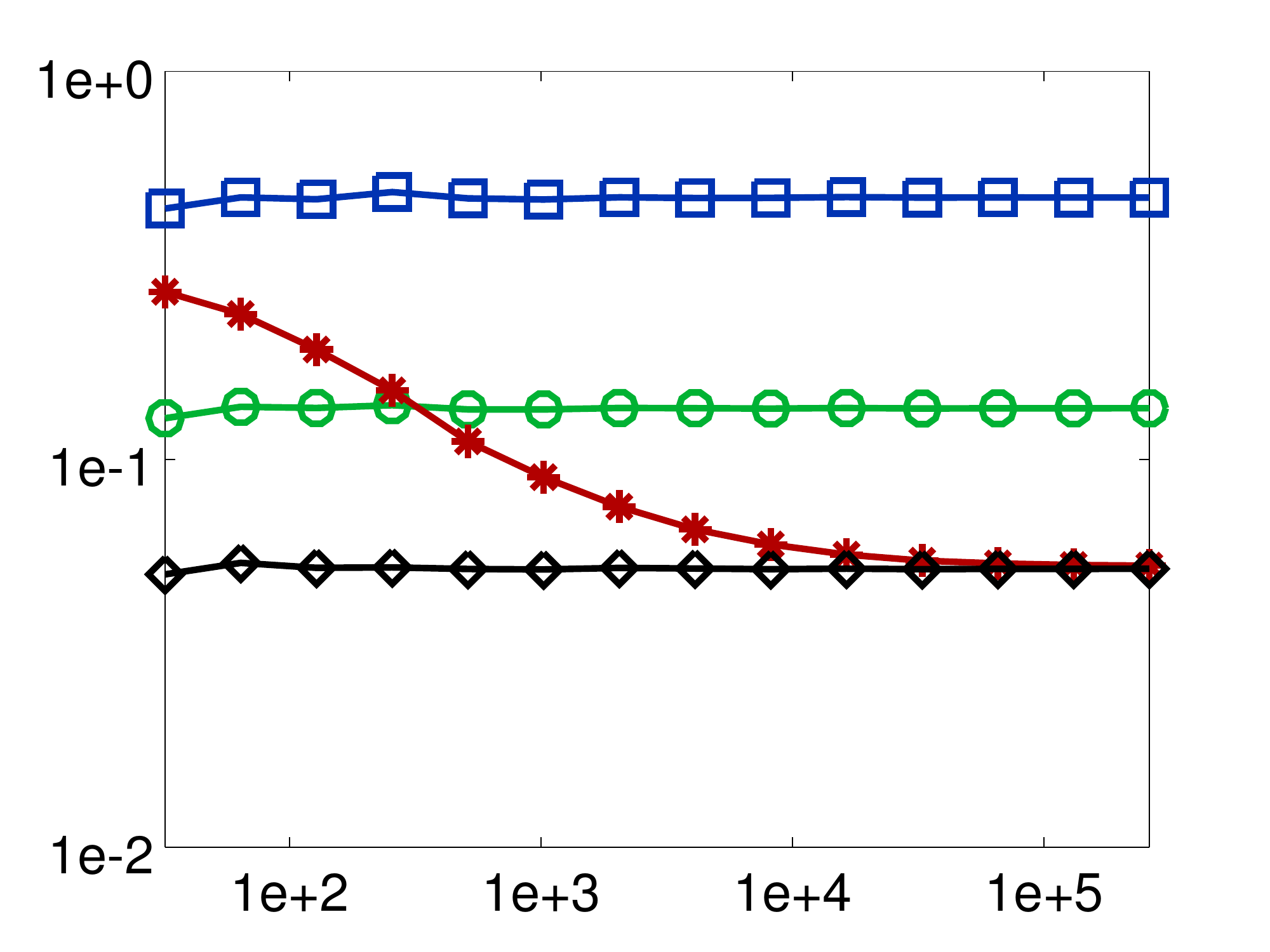}};
\node[anchor=south] at (8.3, 0.7) {\footnotesize $n$};
\node[anchor=south,rotate=90] at (6.5, 2.6) {\footnotesize MAE};
\node[anchor=south] at (8.3, 0.1) {(c)};

\node[anchor=south,inner sep=0] at (12.3, 1) {\includegraphics[width=4cm]{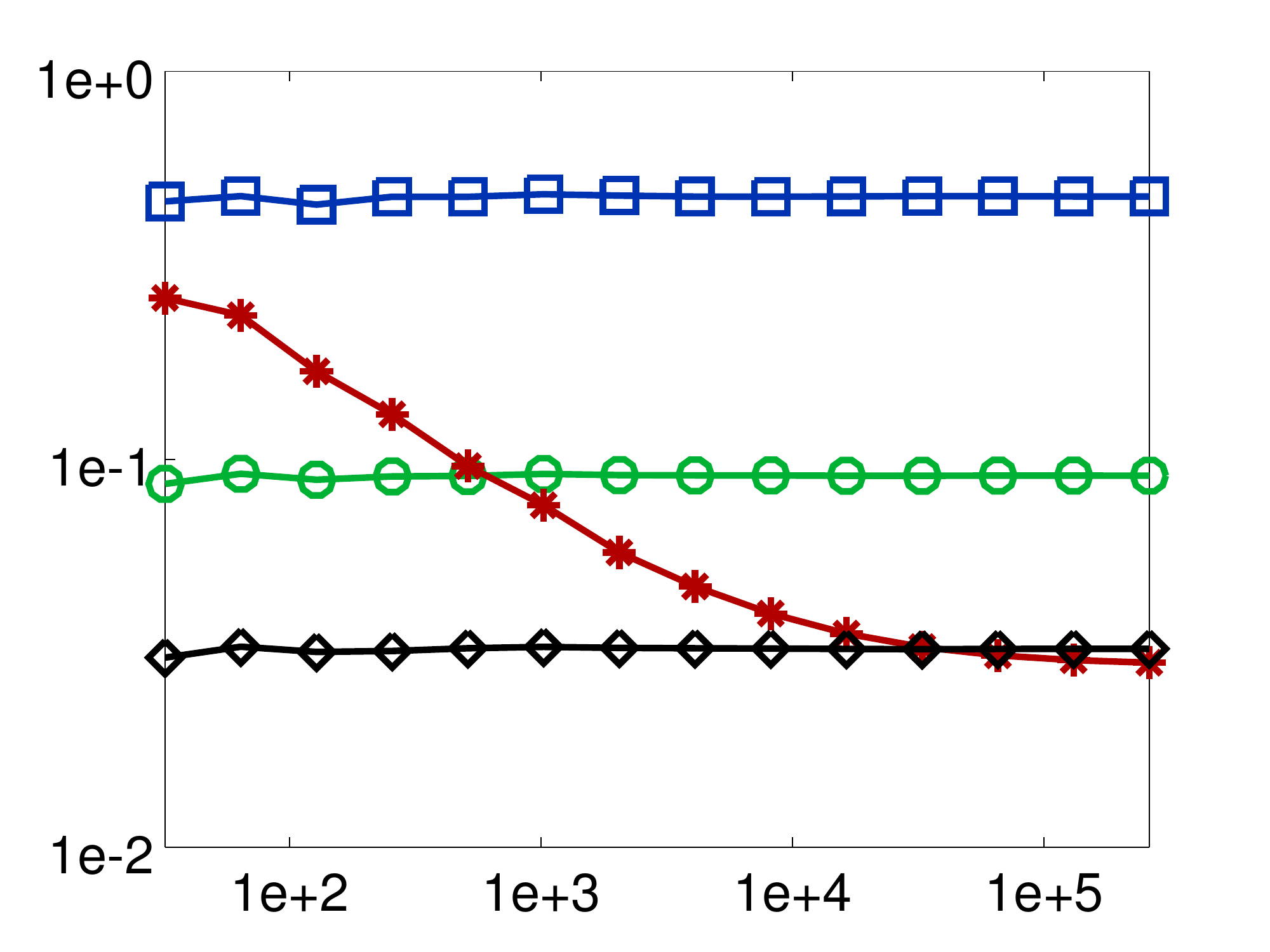}};
\node[anchor=south] at (12.3, 0.7) {\footnotesize $n$};
\node[anchor=south,rotate=90] at (10.5, 2.6) {\footnotesize MAE};
\node[anchor=south] at (12.3, 0.1) {(d)};
\end{tikzpicture}
\end{center}
\caption{
\label{fig:sim-results}
(a) Two sample noise images for $N = 32$ and $r = 2$.
(b) The top $16$ eigenvalues of $\Sigma_n$ for $n = 1024$ images of size $N = 32$.
The estimation error of the conditional power spectra as a function of $n$ for images of size (c) $N = 32$ and (d) $N = 128$.
We have the ensemble average multitaper estimator (blue square), the unprojected multitaper (green circle), the projected multitaper (red star), and the oracle projected multitaper (black diamond).
}
\end{figure*}

We calculate noise images of different sizes $N$ to study the effect of this parameter on the accuracy of the estimation.
Figure \ref{fig:sim-results}(b) shows the top eigenvalues of $\Sigma_n$ for a dataset of size $n = 1024$ with image size $N = 32$.
There is a good separation of the top two eigenvalues from the bulk of the spectrum, with a spectral gap of $\lambda_2/\lambda_3$ of about $3.4$.

Figure \ref{fig:sim-results}(c,d) shows the mean absolute error (MAE) of $\Phat^\MT_{\XfN_1}, \ldots, \Phat^\MT_{\XfN_n}$ and $\Phat^\MTlinear_{\XfN_1}, \ldots, \Phat^\MTlinear_{\XfN_n}$ with respect to the conditional power spectra $P_{\Xf_1|\va_1}, \ldots, P_{\Xf_n|\va_n}$ as functions of $n$ for image sizes $N = 32$ and $N = 64$, respectively.
The unprojected multitaper estimator was computed with $W = 1/16$ while the others used $W = 1/64$.
For comparison, we plot the baseline estimator of assigning $\mu_n$ to each power spectrum and the oracle projection of the multitaper estimate, where the top $r$ eigenvectors of the population covariance $\cov[P_{\Xf|\va}]$ are used in the definition of $\Phat^\MTlinear$.
For small values of $n$, the covariance estimation fails, so the projected estimator $\Phat^\MTlinear$ performs worse compared to the unprojected variant $\Phat^\MT$.
As $n$ increases, however, the linear projection improves results, eventually converging to the error obtained using the subspace obtained from the population covariance.

\begin{figure}

\begin{center}
\begin{tikzpicture}

\node[anchor=south west,inner sep=0] at (0.3,3.46) {\includegraphics[height=1.90cm]{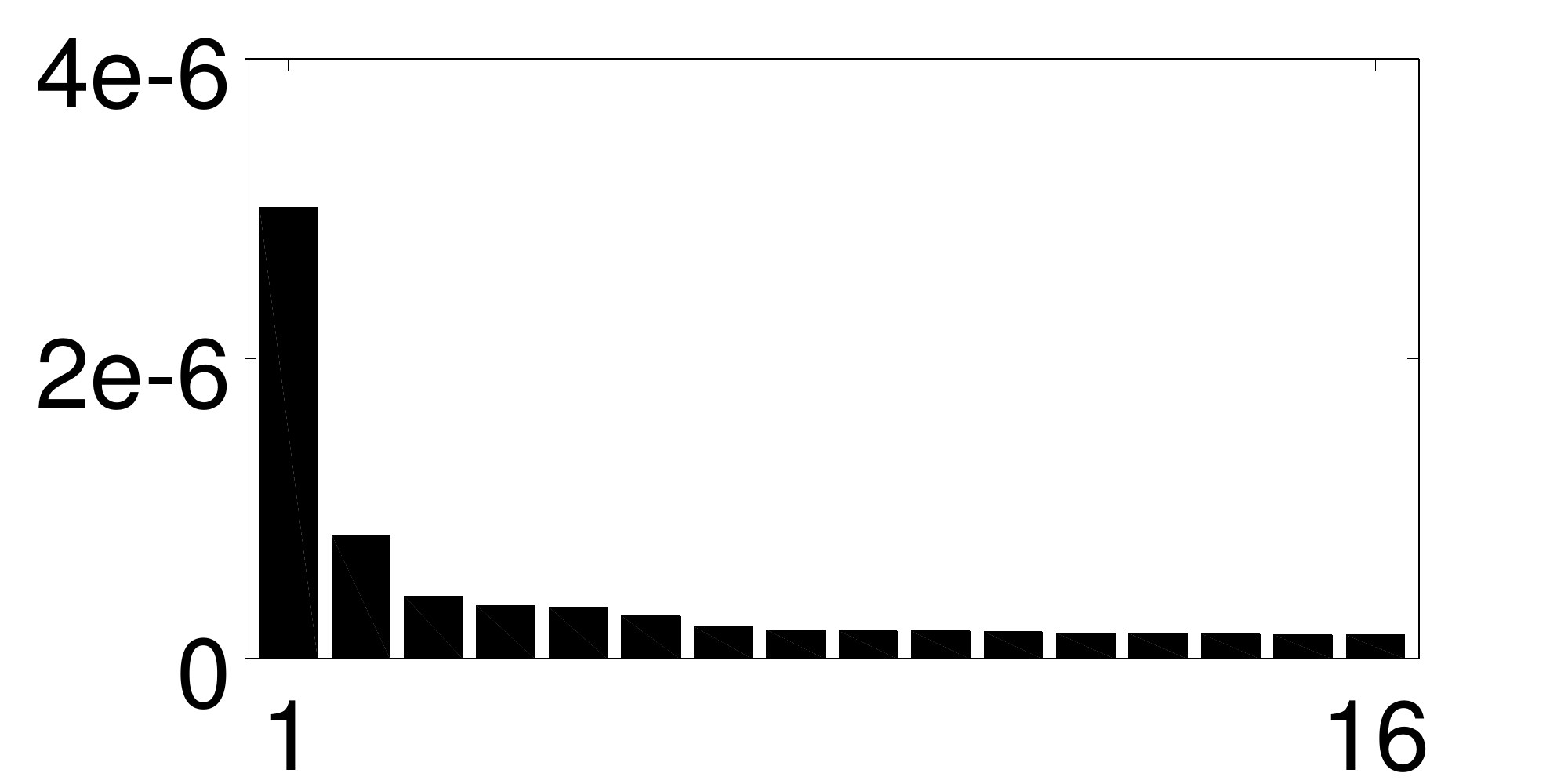}};
\node at (2.2, 3.2) {\footnotesize $l$};
\node[rotate=90] at (0.15, 4.4) {\footnotesize $\lambda_l/N^4$};
\node at (2.2, 2.7) {(a)};

\node[anchor=south west,inner sep=0] at (4.8,3.41) {\includegraphics[height=1.90cm]{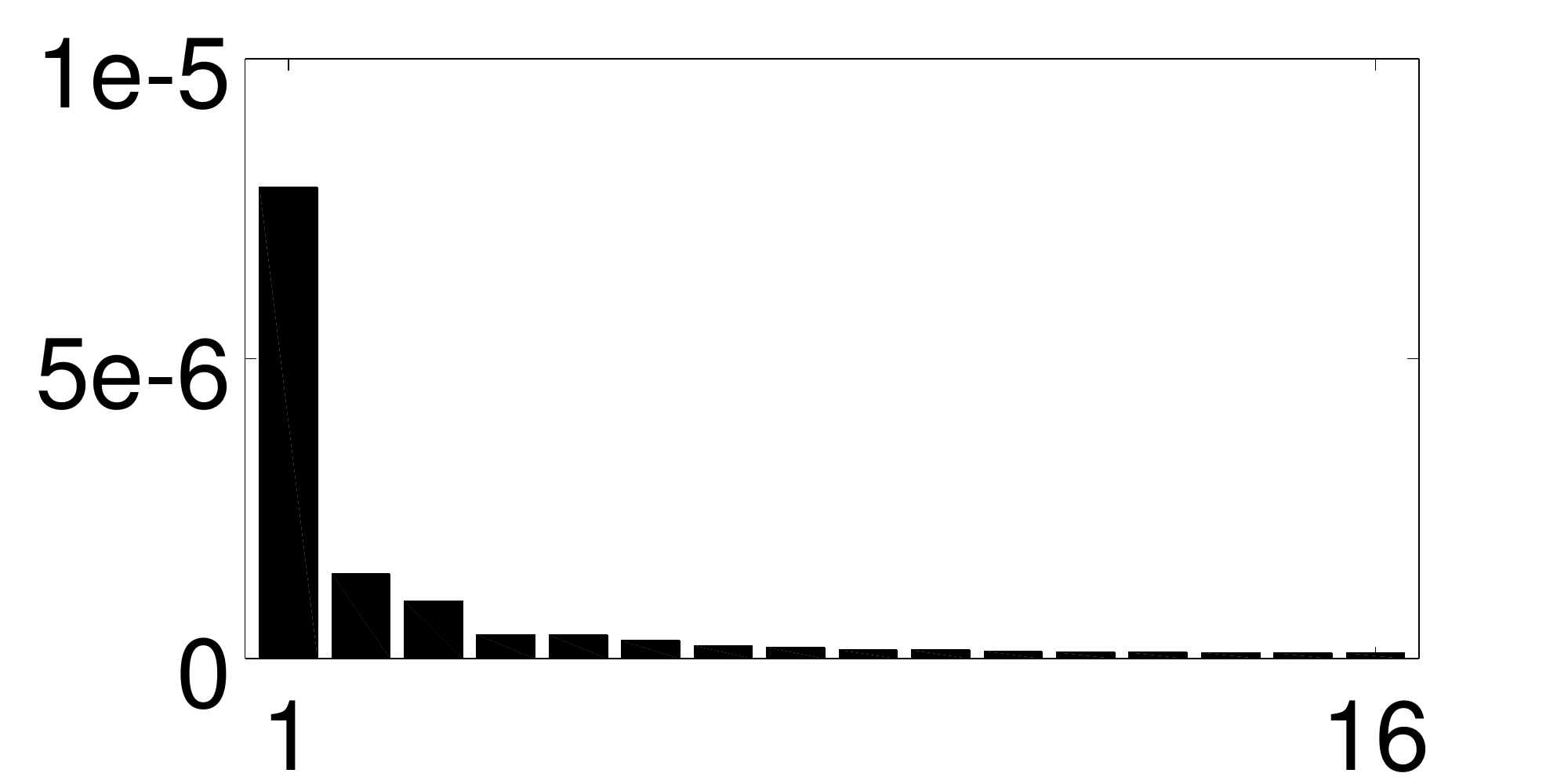}};
\node at (6.7, 3.2) {\footnotesize $l$};
\node[rotate=90] at (4.6, 4.4) {\footnotesize $\lambda_l/N^4$};
\node at (6.7, 2.7) {(b)};

\node[anchor=south west,inner sep=0] at (0.3,0.26) {\includegraphics[height=1.95cm]{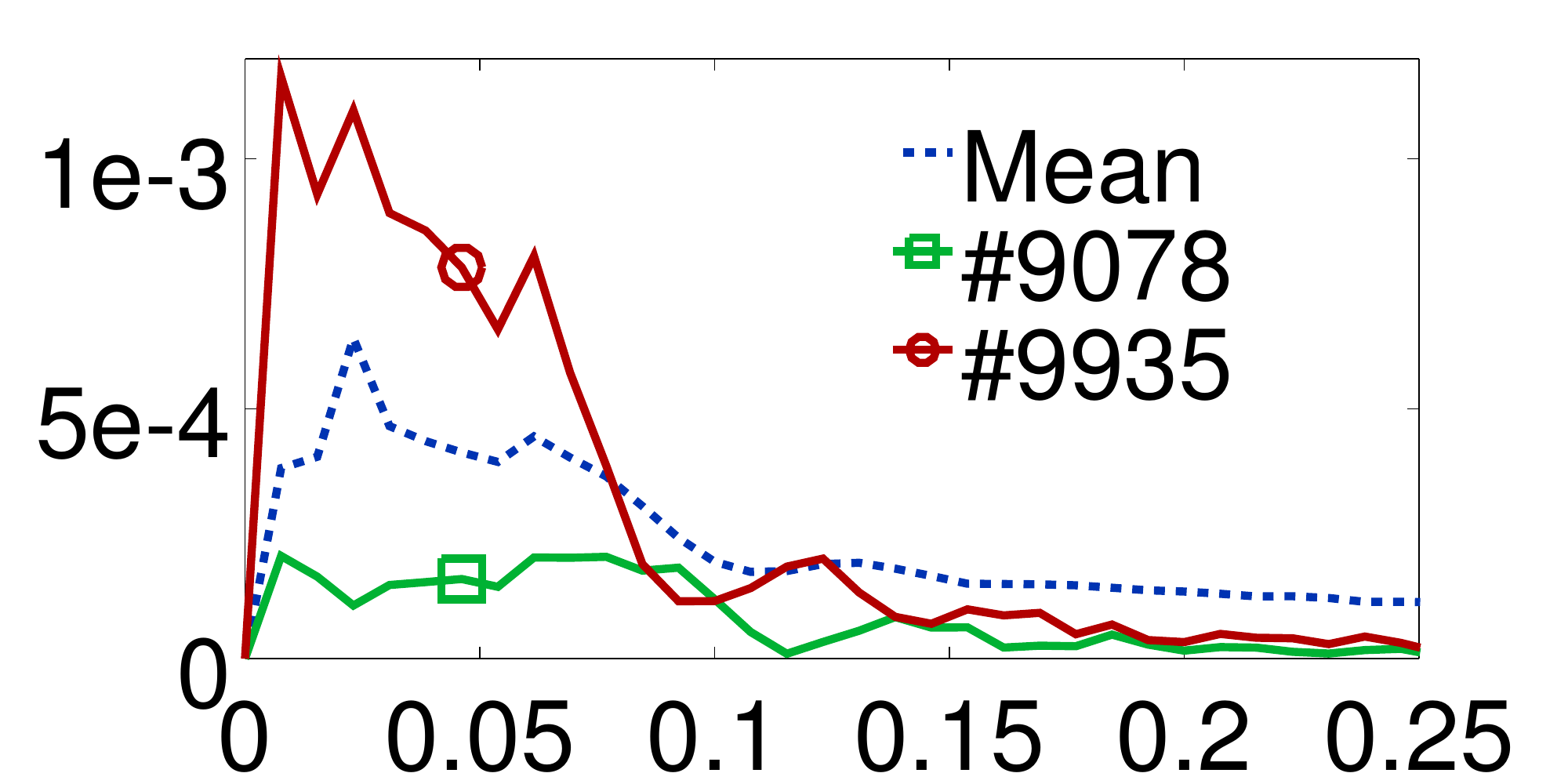}};
\node at (2.2, 0) {\footnotesize $\|\vk\|$};
\node[rotate=90] at (0.15, 1.2) {\footnotesize $\Phat^\MTlinear_{\XfN_s}[\vk]/N^2$};
\node at (2.2, -0.5) {(c)};

\node[anchor=south west,inner sep=0] at (4.8,0.21) {\includegraphics[height=1.95cm]{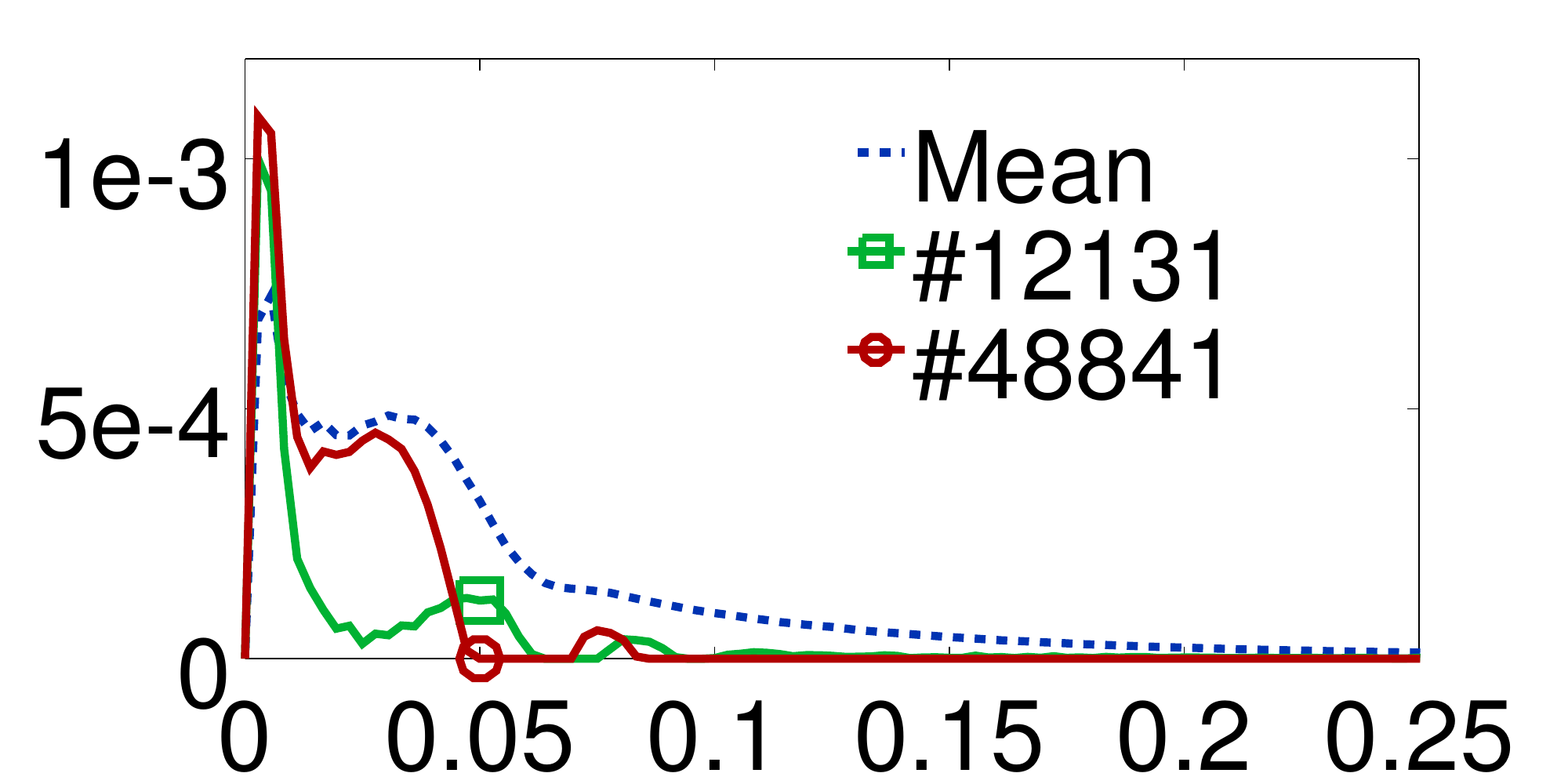}};
\node at (6.7, 0) {\footnotesize $\|\vk\|$};
\node[rotate=90] at (4.6, 1.2) {\footnotesize $\Phat^\MTlinear_{\XfN_s}[\vk]/N^2$};
\node at (6.7, -0.5) {(d)};

\end{tikzpicture}
\end{center}

\caption{
\label{fig:exp-results}
Results experimental datasets.
The top $16$ eigenvalues of $\Sigma_n$ estimated from a dataset depicting a (a) 70S ribosome from E. Coli where $N = 130$ and $n = 10000$ \cite{frank70s} and (b) an 80S ribosome from P. Falciparum where $N = 360$ and $n = 105247$ \cite{scheres80s}.
Sample conditional power spectrum estimates from the (c) 70S ribosome and (d) 80S ribosome datasets.
}
\end{figure}

We also evaluate our algorithm using two experimental datasets from cryo-EM, one containing $n = 10000$ images of size $N = 130$ depicting a 70S ribosome \cite{frank70s} and another containing $n = 105247$ images with $N=360$ depicting an 80S ribosome \cite{scheres80s}.
The top eigenvalues of $\Sigma_n$ are shown for the two datasets in Figure \ref{fig:exp-results}(a,b).
As in the simulation, we see that a few eigenvalues dominate, with the estimation making up a separate bulk distribution.
For the first dataset, a value of $r = 2$ seems appropriate, while for the second dataset, we take $r = 3$.
This number of components is in line with most noise models for cryo-EM, which typically include two or three noise components \cite{baxter2009determination,penczek2010chapter,vulovic2013image}.

Two sample conditional power spectra estimates for each dataset are shown in Figure \ref{fig:exp-results}(c,d).
The projection images corresponding to the two power spectra in \ref{fig:exp-results}(c) are the ones shown previously in Figure \ref{fig:cryoem-samples}.
We see that the estimated power spectra are quite reasonable, with the lower-frequency noise image corresponding to the power spectrum concentrated in the low frequencies while the image with ``whiter'' noise has a flatter estimated power spectrum.

\section{Conclusion}
\label{sec:conclusion}

We have introduced a factor analysis model for noise fields.
These arise when the nature of the noise varies between measurements such as in cryo-EM, where the noise is affected by various experimental factors.
To estimate the individual power spectra of each noise image we introduce a new estimation method which relies on approximating the covariance of these spectra.
The method is shown to provide accurate results in both simulated and experimental datasets.

\section*{Acknowledgment}
The authors thank Fred Sigworth for discussions about cryo-EM noise models, which inspired this work. They also thank Frederik Simons for his helpful comments.

\bibliographystyle{IEEEtran}

\bibliography{refs}

\end{document}